\newtheorem{theorem}{Theorem}[section]
\newtheorem{proposition}[theorem]{Proposition}
\newtheorem{definition}[theorem]{Definition}
\newcommand{\R}{{\mathbb R}}
\newenvironment{proof}{\noindent{\it Proof.}}{\hfill$\Box$}
\begin{document}

\title{Resonant biharmonic problems with asymmetric-superlinear nonlinearities}

\author{Fabiana Maria Ferreira \thanks{fabiana.m.ferreira@ufes.br}\\ Departamento de Matemática Pura e Aplicada \\
Universidade Federal do Espírito Santo\\ 29500-000 -  Alegre, ES, Brazil\\
%E-mail: {\tt fabiana.m.ferreira@ufes.br}\\
%
and \\
Wallisom Rosa\thanks{wallisom@ufu.br }\\  Instituto de Ci\^encias Exatas e Naturais do Pontal \\ Universidade Federal de Uberl\^andia \\
38304-402 -  Ituiutaba, MG, Brazil}
%E-mail: {\tt wallisom@ufu.br}}

%\author{Fabiana Maria Ferreira\thanks{
%E-mail addresses: fabianamariaufv@gmail.com.} \\
%%and  Francisco Odair de Paiva\thanks{E-mail addresses: odair@ime.unicamp.br, Supported by CNPq (306981/2013-0)  and FAPESP (2014/18556-8).}}\\
%{\small Universidade Federal do Espírito Santo}\\
%{\small  Departamento de Matem\'atica Pura e Aplicada }\\
%{\small Caixa Postal 676, CEP:
%29500-000, Alegre - ES, Brazil}}

%\subjclass{Primary 47A15; Secondary 46A32, 47D20}

%\keywords{linear operator, invariant subspace, transitive algebra}

\date{}
\maketitle

\begin{abstract}
The aim of this work is to present results of existence of solutions for a class of biharmonic elliptic problems with homogeneous Navier conditions. The problem is asymmetric and has linear behavior on $-\infty$ and superlinear on $+\infty$. To obtain these results we apply  topological degree theory.
\end{abstract}

\noindent $\bf{Keywords.}$  biharmonic problem, topological degree, a priori estimates.

\noindent{\bf Mathematics Subject Classification.} 35J57, 35J65.

\section{Introduction}

Let $\Omega$ be a smooth bounded domain of $\R^N$, $N>5$.  Our aim is to investigate the solvability of the semilinear biharmonic problem 
\begin{align}
\label{prob:bi}
\begin{cases}
\left(-\Delta\right)^2 u= \lambda_1^2 u + u_+^p + f(x)  & x\in\Omega\\
u  = \Delta u = 0 & x\in \partial\Omega,
\end{cases}
\end{align}
where $\lambda_1$ is the first eigenvalue of $(-\Delta,H_0^1(\Omega))$, $u_+=\max\{u,0\}$ and $p>1$.   
We will assume that the function $f$ is nonzero function satisfying
the following conditions
\begin{align}
\label{hip:f}
 f\in L^r(\Omega)\ \textrm{with} \ r>\frac{N}{3},
\end{align}
and
\begin{align}
\label{hip:f0}
\int_{\Omega}{f(x)\phi_1}<0,
\end{align}
where  $\phi_1$ is the positive eigenfunction associated to $\lambda_1$.  

The assimetric nonlinearity $\lambda u+(u^+)^p$  caracterize the problem as superlinear at $+\infty$ and resonant at $-\infty$.

We will apply the Brezis-Turner technique to get a priori estimates for the solutions.  In the nonresonant case, this tecnique was sucessfull applied for a poliharmonic equation in \cite{CdeFM}.  For the resonant case,  we will follow the aproach in \cite{CFSl}, where the main operator is the Laplacian.  Then we show:

%Our aim is to apply the Brezis-Turner technique to get a priori estimates for the solutions.  In the nonresonant case, this tecnique was sucessfull applied for a poliharmonic equation in \cite{CdeFM}.  For the resonant case,  we will follow the aproach in \cite{CFSl}, where the main operator is the Laplacian.  Then we show:

\begin{theorem}
\label{teo:existencia}
Let $f$ satisfy \eqref{hip:f} and \eqref{hip:f0}, and suppose that 
\begin{align}
\label{hip:p}
\max\left\{1,\frac{4}{N-4}\right\}<p<\frac{N+1}{N-3}.
\end{align}
Then problem \eqref{prob:bi} admits at least one solution $ u \in W^{4,r}\cap W^{2,r}_0(\Omega)$.
\end{theorem}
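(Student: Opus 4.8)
The plan is to recast \eqref{prob:bi} as a fixed-point problem for a compact operator and to apply Leray--Schauder degree theory, with the bulk of the work going into a priori estimates. First I would introduce the solution operator $K$ of the biharmonic problem under Navier conditions: since $u=\Delta u=0$ on $\partial\Omega$, the equation $(-\Delta)^2u=h$ splits into the two second-order Dirichlet problems $-\Delta v=h$, $v=0$ and $-\Delta u=v$, $u=0$. This lets me define $K=(-\Delta)^{-1}\circ(-\Delta)^{-1}$ as a composition of two second-order solution operators, which is compact from $L^q(\Omega)$ into a suitable space $E$ (for instance $C^1_0(\overline\Omega)$ or $W^{2,q}_0\cap W^{4,q}$) by elliptic regularity and Sobolev embedding. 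Solutions of \eqref{prob:bi} are then exactly the zeros of $u\mapsto u-K(\lambda_1^2u+u_+^p+f)$, and because the splitting reduces everything to second-order theory I can import the maximum principle and the Hardy--Sobolev inequalities needed below.

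The heart of the proof is the a priori bound, so I would establish it next. Testing \eqref{prob:bi} against $\phi_1$ and using that $\phi_1$ solves $(-\Delta)^2\phi_1=\lambda_1^2\phi_1$ under the same Navier conditions, the generalized Green identity makes the resonant linear term cancel and yields the key identity
\begin{equation*}
\int_\Omega u_+^p\,\phi_1 = -\int_\Omega f\,\phi_1 =: c_0 > 0,
\end{equation*}
where positivity is exactly the content of \eqref{hip:f0}. This pins down a weighted $L^1$-norm of the superlinear term independently of $u$. From here I would run the Brezis--Turner argument through the second-order splitting: using the weighted bound together with a Hardy-type inequality (with weight comparable to $\operatorname{dist}(\cdot,\partial\Omega)\sim\phi_1$) I expect to bound $u_+$ first in a Sobolev space and then, via the $L^q$-regularity of $K$ and the hypothesis $f\in L^r$, $r>N/3$, to bootstrap to a bound in $W^{4,r}\cap W^{2,r}_0(\Omega)$. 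The upper restriction $p<\frac{N+1}{N-3}$ in \eqref{hip:p} is precisely the subcriticality that makes this test-function estimate close for the fourth-order operator, while the lower restriction $p>\max\{1,\frac{4}{N-4}\}$ guarantees that the starting integrability is admissible and that the bootstrap terminates in the stated space.

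It remains to control the resonant side at $-\infty$, and here I would argue by contradiction with a blow-up/normalization scheme. Supposing a sequence of solutions with $\|u_n\|\to\infty$, set $v_n=u_n/\|u_n\|$; the weighted identity shows the superlinear contribution is of lower order after normalization, so $v_n$ converges to some $v$ solving $(-\Delta)^2v=\lambda_1^2v$ with Navier conditions, whence $v=c\phi_1$. The sign information forces $c\le0$, i.e. $v=-|c|\phi_1\le0$; passing to the limit in the $\phi_1$-tested relation then contradicts $\int_\Omega f\phi_1<0$, ruling out blow-up and giving a uniform bound $\|u\|_E<R$.

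With the a priori bound in hand, the final step is the degree computation, which I would carry out along the lines of \cite{CFSl}. Embedding the problem in the family obtained by replacing $f$ with $f+s\phi_1$, the same $\phi_1$-test gives $\int_\Omega u_+^p\phi_1=-\int_\Omega f\phi_1-s\int_\Omega\phi_1^2$; since the left-hand side is nonnegative, no solution can exist once $s$ exceeds the threshold $s_1=-\big(\int_\Omega f\phi_1\big)/\big(\int_\Omega\phi_1^2\big)$, while for $s\in[0,s_1]$ the estimates above are uniform in $s$. Homotopy invariance of the Leray--Schauder degree on the ball $B_R$, together with this nonexistence for large forcing and the threshold structure at $s=s_1$ (where the problem reduces to a solvable resonant linear equation), yields a nonzero degree for the original map at $s=0$ and hence a solution $u\in W^{4,r}\cap W^{2,r}_0(\Omega)$. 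The main obstacle throughout is the a priori estimate: transplanting the Brezis--Turner test-function computation to the biharmonic setting, where no direct maximum principle is available, and simultaneously taming the resonance at $-\infty$ with only the one-sided condition \eqref{hip:f0}.
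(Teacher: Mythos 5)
Your setup and the first half of the a priori estimate follow the paper's route: the Navier splitting into two second-order Dirichlet problems, the compact fixed-point formulation on $C_0^1(\overline\Omega)$, the identity $\int_\Omega u_+^p\phi_1=-\int_\Omega f\phi_1>0$ from testing with $\phi_1$, and the Brezis--Turner/Hardy--Sobolev machinery (Proposition \ref{cor:m}) with bootstrap are all there. Where you diverge is the control of the resonant direction: the paper decomposes $u=t\phi_1+u_1$, bounds $t$ from above via the weighted identity, bounds $u_1$ through the isomorphism $(-\Delta)^2-\lambda_1^2$ on $\{\int u\phi_1=0\}$, and in the case $t<0$ bounds $|t|$ by $\|u_1\|_{C_0^1}$ using that $\phi_1$ is interior to the positive cone and $u_+\neq0$; you instead propose a blow-up/normalization contradiction. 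That alternative is plausible in principle, but it only yields a qualitative bound $\|u\|_E<R$ by contradiction, whereas the paper needs (and proves, in Theorem \ref{lem:estimativa}) the \emph{quantitative} bound $\|u\|_{C_0^1}\le\rho(\|f\|_r)$ with $\rho(0)=0$. This quantitative dependence is not a technicality: it is the engine of the whole degree computation, and your proposal never produces it.

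The genuine gap is the degree argument, which as written proves the wrong thing. Embedding the problem in the family $f+s\phi_1$, observing nonexistence for $s>s_1$, and invoking homotopy invariance with uniform bounds on $[0,s_1]$ can only give $\deg(Id-T_f,B_R,0)=\deg(\cdot)|_{s>s_1}=0$ (and in fact the homotopy cannot even be run across $s=s_1$, where the sign condition \eqref{hip:f0} degenerates and unbounded families of nonpositive solutions of the resonant linear problem appear). A zero degree gives no existence; this "nonexistence for large forcing" scheme is the Ambrosetti--Prodi device for producing a \emph{second} solution once a first one is known, not for producing a first one. The paper's mechanism for a nonzero degree is entirely different and is the part you are missing: because $\rho(\|f\|_r)$ is small when $\|f\|_r$ is small, every solution $u_0$ of a small-forcing problem satisfies $\lambda_1^2<\lambda_1^2+p(u_0^+)^{p-1}<\lambda_2^2$ pointwise, so by strict monotonicity of the weighted eigenvalues (Proposition \ref{prop:pcubi}, resting on the unique continuation property of Theorem \ref{teo:contunicabi}) the linearization \eqref{prob:linearizacao} has only the trivial solution; hence all solutions are nondegenerate with the same local index $-1$, and the total degree is a nonzero sum \emph{provided at least one solution exists}. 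That existence is supplied by the explicit choice $f_1=-(t\phi_1)^p$ with small norm, for which $t\phi_1$ solves \eqref{prob:bi}; one then homotopes along $(1-\tau)f+\tau f_1$, all of which satisfy \eqref{hip:f0}, using Theorem \ref{lem:estimativa} for uniform bounds. Without the quantitative estimate, the eigenvalue-comparison/nondegeneracy step, and the explicit small solution, your proposal has no way to certify that the degree is nonzero.
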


We point out that the condition \eqref{hip:f} implies, by regularity theory, that all weak solutions $u\in H_0^1(\Omega)\cap H^2(\Omega)$ of
\eqref{prob:bi} belong to $W^{4,r}(\Omega)$, and recall that $W^{4,r}(\Omega)\subset C^1(\Omega)$ because $r>N/3$. 

Recently, there has been increasing interest  in study elliptic problem with asymmetric nonlinearities of the type: asymptotically linear at $-\infty$ and superlinear at $+\infty$.   

We emphasize the existence of a priori bounds of solution is very important for to show the existence of solutions
when the problem in question is not of variational types. 
The techniques used here were inspired by the following works \cite{BeTu}, \cite{CdeFM} and \cite{CFSl}. 
In article  \cite{CFSl}, the authors obtain a solution of the following superlinear elliptic problem 
\begin{align*}
%\label{prob:DCSL}
\begin{cases}
-\Delta u =  \lambda_1 u + u^p_+ + f(x)  & x\in\Omega\\
u =0  & x\in \partial\Omega,
\end{cases}
\end{align*}
under the assumptions
$1<p<\frac{N+1}{N-1}$, $f\in L^r$ with $r>N$  and $\int_{\Omega}{f\phi_1}<0$.  The proof of Theorem \ref{teo:existencia} uses the thecnique introduced in \cite{BeTu}.  The method consists in getting a priori bounds, using Hardy-Sobolev type inequalities, with topological degree arguments.   
%This approach was also used in \cite{bib:ortega} to study an Ambrosetti-Prodi-type problem.  
Similar problems, with Dirichelt and Neumann boundary condition, can be found in \cite{bib:KaOr1,bib:Ward}.  

The Neumann scalar equation:
\begin{equation}\label{P4intro}
\left\{\begin{array}{lr}
-\Delta u=(u^+)^p+f(x),& x\in\Omega,\\ 
\displaystyle\frac{\partial u}{\partial \nu}=0,& x\in\partial\Omega,
\end{array}\right.
\end{equation}
with $f$ satisfying (3), was studied in the paper \cite{bib:Ward}.  The author applied a continuation theorem due to Mawhin and others results due to Brezis and Strauss.   

In the work \cite{WallyOdair} was studied the Hamiltonian system:
\begin{equation}\label{P4intro2}
\left\{\begin{array}{lr}
\displaystyle -\Delta u=(v^+)^p+f(x), & \hbox{em}\ \ \Omega,\\
\displaystyle -\Delta v=(u^+)^q+g(x), & \hbox{em}\ \ \Omega,\\
\displaystyle \frac{\partial u}{\partial \nu}=\frac{\partial v}{\partial\nu}=0,& \hbox{em}\ \ \partial\Omega.
\end{array}\right.
\end{equation}
There also was assumed that $f$ and $g$ satisfy (3). Note that, with Neumann conditions at the border, $\varphi_1$ is constant (so has signal set) and so we must to assume that $f$ and $g$ have strictly negative integral in $\Omega$. As well as in paper \cite{CuestaDjairo}, the authors used topological methods for obtaining the results of existence of solutions. The theory of the degree of Leray-Schauder was an essential tool in this process, also they used results of \cite{bib:KaOr2} to obtain the essential a priori estimates about solutions for the system  (\ref{P4intro2}).

We also mention that in \cite{CFSl}, the authors consider under appropriate conditions  a polyharmonic equation 
\begin{align}
\begin{cases}
\left(-\Delta\right)^m u = h(x,u, \Delta u, \cdots , \Delta^{m-1} u)  & x\in\Omega\\
u  = \Delta u = \cdots = \Delta^{m-1} u = 0 & x\in \partial\Omega.
\end{cases}
\end{align}

We will  denote the usual norm of the Sobolev Space $W^{k,p}(\Omega)$ by $\left\|\cdot\right\|_{k,p}$.
%\begin{align*}
%\left\|u\right\|_{k,p}=\left(\sum_{\left|\alpha\right|\leq k}\left\|D^{\alpha}u\right\|_p^p\right)^{1/p}.
%\end{align*}
The space $C^1_0(\overline{\Omega})$ is defined as
 $$C^1_0(\overline{\Omega})=\{u\in C^1(\overline{\Omega}); u=0 \ \textrm{on} \ \partial \Omega\}$$
which was a Banach space with the norm
 $$\left\|u\right\|_{C^1_0(\overline{\Omega})}=\max_{x\in\overline{\Omega}}\left|u(x)\right|+\max_{x\in\overline{\Omega}}\left|\nabla u(x)\right|.$$

\section{A priori estimates}

First  is necessary to remember the following lemma based on the Hardy-Sobolev inequality.  The proof  can be found in \cite{CdeFM}.

\begin{proposition}
\label{cor:m}
Let $u\in W_0^{1,s}(\Omega)\cap W^{4,s}(\Omega)$ with $1<s<\frac{N}{4}$ and $\tau \in [0,1]$. If
%\begin{enumerate}
 $$\frac{1}{r} = \frac{1}{s} - \frac{4-\tau}{N}, $$
%\item $\frac{1}{r}=\frac{\tau}{N}$ \quad se \quad  $s=\frac{N}{2}$,
%\item $\frac{1}{r} = \tau\left(\frac{1}{s}-\frac{1}{N}\right)$ \quad se \quad $\frac{N}{2m} <s<N$,
%\item $\forall r>1$, $\forall \tau\in \left[0,1\right]$, \quad se \quad $N\geq s$.
%\end{enumerate}
then
\begin{align}
\label{desig:corm}
\left\|\frac{u}{\phi_1^{\tau}}\right\|_{L^r} \leq C \left\|u\right\|_{W^{4,s}}
\end{align}
where the constant  $C$ depends only on  $\tau$, $s$ and $N$
\end{proposition}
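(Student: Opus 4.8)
The plan is to reduce the eigenfunction weight $\phi_1^{-\tau}$ to a power of the boundary distance $d(x)=\mathrm{dist}(x,\partial\Omega)$, and then to obtain the estimate by interpolating between the two endpoint cases $\tau=0$ and $\tau=1$. First I would observe that, since $\partial\Omega$ is smooth and $\phi_1$ is the first (positive) eigenfunction, Hopf's boundary lemma together with $\phi_1\in C^1(\overline{\Omega})$ and $\phi_1>0$ in $\Omega$ yields constants $c_1,c_2>0$ with $c_1 d(x)\le \phi_1(x)\le c_2 d(x)$ on $\overline{\Omega}$. Consequently $\left\|u/\phi_1^{\tau}\right\|_{L^r}\le c_1^{-\tau}\left\|u/d^{\tau}\right\|_{L^r}$, so it suffices to bound $\left\|u/d^{\tau}\right\|_{L^r}$.

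Next I would establish the two endpoints. For $\tau=0$ the stated relation reads $\tfrac1r=\tfrac1s-\tfrac4N$, which is exactly the Sobolev embedding $W^{4,s}(\Omega)\hookrightarrow L^{r_0}(\Omega)$ with $\tfrac1{r_0}=\tfrac1s-\tfrac4N$ (valid because $s<\tfrac N4$), giving $\|u\|_{L^{r_0}}\le C\|u\|_{W^{4,s}}$. For $\tau=1$ the relation reads $\tfrac1r=\tfrac1s-\tfrac3N$; here I would combine Hardy's inequality $\|w/d\|_{L^q}\le C\|\nabla w\|_{L^q}$, valid for $w\in W_0^{1,q}(\Omega)$, with the embedding $W^{4,s}(\Omega)\hookrightarrow W^{1,r_1}(\Omega)$ for $\tfrac1{r_1}=\tfrac1s-\tfrac3N$ (a gain of three derivatives). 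Since $s<\tfrac N4<\tfrac N3$ we have $\tfrac1{r_1}>0$, so $r_1$ is finite, and the vanishing trace of $u\in W_0^{1,s}(\Omega)$ persists under the higher integrability, i.e. $u\in W_0^{1,r_1}(\Omega)$; hence Hardy applies and produces $\|u/d\|_{L^{r_1}}\le C\|\nabla u\|_{L^{r_1}}\le C\|u\|_{W^{4,s}}$.

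Finally I would interpolate. Writing $|u|/d^{\tau}=|u|^{1-\tau}\,(|u|/d)^{\tau}$ and applying the generalized H\"older inequality with exponents forced by $\tfrac1r=\tfrac{1-\tau}{r_0}+\tfrac{\tau}{r_1}$, a direct computation shows this equals $\tfrac1s-\tfrac{4-\tau}{N}$, matching the hypothesis on $r$, and gives
$$\left\|u/d^{\tau}\right\|_{L^r}\le \|u\|_{L^{r_0}}^{\,1-\tau}\,\|u/d\|_{L^{r_1}}^{\,\tau}\le C\|u\|_{W^{4,s}}.$$
Combined with the reduction of the first step, this yields \eqref{desig:corm}, and one checks that the resulting constant depends only on $\tau$, $s$, and $N$ (through the Sobolev, Hardy, and Hopf constants).

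The main obstacle is the endpoint $\tau=1$: one must ensure that Hardy's inequality is genuinely available at the exponent $r_1$ and that the homogeneous boundary condition survives the jump in integrability from $s$ to $r_1$. The constraint $s<\tfrac N4$ is precisely what makes both endpoint exponents finite and positive, while the $C^1$-regularity of $\phi_1$ up to $\partial\Omega$ is what allows one to trade the eigenfunction weight for the distance weight; the interpolation step itself is then routine.
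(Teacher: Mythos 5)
Your argument is correct and is essentially the standard proof of this Hardy--Sobolev inequality: the paper itself gives no proof but defers to \cite{CdeFM}, where (following Brezis--Turner) the estimate is obtained exactly as you do, by comparing $\phi_1$ with the boundary distance via Hopf's lemma, using the Sobolev embedding at $\tau=0$ and Hardy's inequality combined with $W^{4,s}\hookrightarrow W^{1,r_1}$ at $\tau=1$, and interpolating with H\"older. The only caveat is cosmetic: the constant inevitably depends on $\Omega$ as well (through the Sobolev, Hardy and Hopf constants), a dependence the paper's statement also suppresses.
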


Now we can used that to prove a priori bounds for the biharmonic problem \eqref{prob:bi}.

\begin{theorem}
\label{lem:estimativa}
Let $u\in H_0^1(\Omega)\cap H^2(\Omega)$ be a solution of problem \eqref{prob:bi}. 
 Under the assumptions of Theorem \ref{teo:existencia}, 
there exist an increasing continuous function
$\rho: \mathbb{R}^+ \rightarrow  \mathbb{R}^+ $,
depending only on  $p$ e $\Omega$, such that $\rho\left(0\right)=0$ and 
\begin{align}
\label{desig:rho}
\left\|u\right\|_{C_0^1(\bar{\Omega})}  \leq \rho (\left\|f\right\|_{r}).
\end{align}
\end{theorem}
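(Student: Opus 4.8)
The plan is to run the Brezis--Turner scheme, treating the resonance at $-\infty$ by exploiting the sign condition \eqref{hip:f0}. The first step is to test the equation against the first eigenfunction $\phi_1$. Since the Navier conditions $u=\Delta u=0$ make $(-\Delta)^2$ self-adjoint and $(-\Delta)^2\phi_1=\lambda_1^2\phi_1$, two integrations by parts cancel the linear term $\lambda_1^2 u$, leaving the identity $\int_\Omega u_+^p\phi_1=-\int_\Omega f\phi_1$. By \eqref{hip:f0} the right-hand side is positive, and H\"older bounds it by $\|f\|_r\|\phi_1\|_{r'}$, so one obtains the fundamental weighted estimate $\int_\Omega u_+^p\phi_1\le C\|f\|_r$.

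Next, following \cite{CdeFM,BeTu}, I would convert this weighted $L^1$ control of $u_+^p$ into an $L^s$ bound by interpolating against Proposition \ref{cor:m}. Writing $u_+^{ps}=(u_+^p\phi_1)^a(u_+/\phi_1^\tau)^b$ with $a=\tau b$ and $pa+b=ps$, and applying H\"older with the weighted bound in the first factor and \eqref{desig:corm} in the second, one reaches $\|u_+^p\|_{L^s}\le C\|f\|_r^{a/s}\|u\|_{W^{4,s}}^{\gamma}$ with $\gamma=p/(p\tau+1)$. The requirement $\gamma<1$ forces $\tau>(p-1)/p$, while $\tau\le 1$, the compatibility relation $1/r^\ast=1/s-(4-\tau)/N$, and the admissible range $1<s<N/4$ of Proposition \ref{cor:m} together encode exactly the hypothesis \eqref{hip:p}; this is the step that pins down $\max\{1,4/(N-4)\}<p<(N+1)/(N-3)$.

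To handle the resonance I would decompose $u=t\phi_1+w$ with $\int_\Omega w\phi_1=0$. Projecting the equation onto $\phi_1^{\perp}$, where $(-\Delta)^2-\lambda_1^2$ is invertible (spectral gap to $\lambda_2^2$), elliptic regularity yields $\|w\|_{W^{4,s}}\le C(\|u_+^p\|_{L^s}+\|f\|_{L^s})\le C(1+\|u\|_{W^{4,s}}^{\gamma})$. It then remains to control the scalar $t$. An upper bound is immediate: if $t$ were large and positive, then $u\approx t\phi_1$ would be large throughout the interior and $\int_\Omega u_+^p\phi_1$ would blow up, contradicting the weighted bound.

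The hard part is the lower bound on $t$, i.e.\ excluding $t\to-\infty$, and this is precisely where \eqref{hip:f0} is indispensable. I would argue by contradiction: if a sequence of solutions has $M_n:=\|u_n\|_{W^{4,s}}\to\infty$, then $v_n=u_n/M_n$ is bounded, the nonlinear term satisfies $\|u_{n+}^p\|_{L^s}/M_n\le C M_n^{\gamma-1}\to0$, and $v_n\to c\phi_1$ where $(-\Delta)^2 v=\lambda_1^2 v$. The case $c>0$ is excluded as above, while $c<0$ means $t_n\to-\infty$ and $u_{n+}$ concentrates in the shrinking boundary layer $\{\phi_1<\|w_n\|/|t_n|\}$; estimating the weighted integral over this layer via \eqref{desig:corm} forces $\int_\Omega u_{n+}^p\phi_1\to0$, contradicting $\int_\Omega u_{n+}^p\phi_1=-\int_\Omega f\phi_1>0$. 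Making this concentration estimate quantitative, given that $W^{4,s}$ with $s<N/4$ does not embed into $L^\infty$, is the main technical obstacle. Once $t$ and $w$ are controlled, $\|u\|_{W^{4,s}}\le C(1+\|u\|_{W^{4,s}}^{\gamma})$ with $\gamma<1$ gives an a priori $W^{4,s}$ bound; a bootstrap through the equation raises the integrability to $W^{4,r}$, and since $r>N/3$ the embedding $W^{4,r}\hookrightarrow C^1$ delivers the $C_0^1$ bound. Tracking the dependence on $\|f\|_r$ at each step produces the increasing continuous $\rho$ with $\rho(0)=0$.
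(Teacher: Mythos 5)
Your first two steps coincide with the paper's: testing against $\phi_1$ (using self-adjointness under the Navier conditions) to get $\int_\Omega u_+^p\phi_1=-\int_\Omega f\phi_1\le C\|f\|_r$, the decomposition $u=t\phi_1+u_1$, the H\"older splitting $u_+^{p+1}=u_+^{p\alpha}\phi_1^{\alpha}\cdot\phi_1^{-\alpha}u_+^{p(1-\alpha)+1}$ combined with Proposition \ref{cor:m}, the invertibility of $(-\Delta)^2-\lambda_1^2$ on the orthogonal complement of $\phi_1$, and the closing Young-inequality/bootstrap argument are all exactly what the paper does (with $s=\frac{p+1}{p}$). The upper bound on $t$ is also the paper's: $t\le C\bigl(\int_\Omega u_+^p\phi_1\bigr)^{1/p}\le C\|f\|_r^{1/p}$.

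The genuine gap is precisely where you flag it: the exclusion of $t\to-\infty$. Your blow-up/normalization argument is left incomplete at the decisive point (the case $c<0$, where you would need a quantitative concentration estimate that you admit you do not have), and even if completed it would only yield a qualitative bound rather than the increasing continuous $\rho$ with $\rho(0)=0$ demanded by the statement. The paper's device is different and entirely non-asymptotic, resting on two observations you are missing. First, when $t<0$ one has $u=t\phi_1+u_1\le u_1$ pointwise, hence $u_+\le (u_1)_+$; this lets the weighted H\"older/Hardy--Sobolev estimate close on $\|u_1\|_{4,\frac{p+1}{p}}$ \emph{alone}, with no reference to $t$, so $u_1$ is bounded in $W^{4,r}\hookrightarrow C^1_0(\overline{\Omega})$ independently of $t$. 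Second, by Hopf's lemma $\phi_1$ lies in the interior of the positive cone of $C_0^1(\overline{\Omega})$: there is $\epsilon_0>0$ such that every $w$ with $\|w-\phi_1\|_{C_0^1}<\epsilon_0$ is positive in $\Omega$. Since $\int_\Omega u_+^p\phi_1=-\int_\Omega f\phi_1>0$ forces $u_+\not\equiv 0$ while $t<0$, the function $u/t=\phi_1+u_1/t$ cannot be positive, so $\|u_1/t\|_{C_0^1}\ge\epsilon_0$, i.e.
\begin{equation*}
|t|\le \frac{1}{\epsilon_0}\,\|u_1\|_{C_0^1(\overline{\Omega})},
\end{equation*}
which converts the already-obtained bound on $u_1$ into a bound on $|t|$ and finishes the proof. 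You should replace your compactness argument by this cone argument (or else supply the missing concentration estimate and the tracking of constants needed to produce $\rho$).
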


\begin{proof}
Let $u\in H_0^1(\Omega)\cap H^2(\Omega)$ be a weak solution of \eqref{prob:bi}. By  multiplying  \eqref{prob:bi} 
by eingefunction $\phi_1$ we find that
\begin{align*}
\int_{\Omega}{(-\Delta)^2 u \phi_1} = \int_{\Omega}{\lambda_1^2 u \phi_1}  + \int_{\Omega}{u_+^p\phi_1} + \int_{\Omega}{f\phi_1},
\end{align*}
then
\begin{align}
\label{desig:bit}
\int_{\Omega}{u_+^p\phi_1}= -\int_{\Omega}{f\phi_1}\leq C\left\|f\right\|_{r}.
\end{align}
Now we write $u=t \phi_1+ u_1$ where  $u_1$ and $\phi_1$ orthogonal in space $H_0^1(\Omega)$ 
therefore $\int_{\Omega}{u_1\phi_1}=0$.
Multiplyng this decomposition for  $\phi_1$ and  integration, result 
\begin{align*}
\int_{\Omega}{u\phi_1}	=  t\int_{\Omega}{\phi_1^2} + \int_{\Omega}{u_1\phi_1},
\end{align*}
then
\begin{align*}
t =  C\int_{\Omega}{u\phi_1}	=  C\int_{\Omega}{(u_{+}-u_-)\phi_1}   \leq  C \left(\int_{\Omega}{u_+^p\phi_1}\right)^{1/p},
\end{align*}
and using the inequality \eqref{desig:bit}, we conclude 
\begin{align}
\label{desig:tt}
t\leq C\left\|f\right\|_{r}^{1/p}.
\end{align}
After that, we broke the proof in two parts, according to the sign of $t$.

%Note que se $t>0$ então encontramos uma estimativa para $\left|t\right|$. Nesse caso resta encontramos uma estimativa para $u_1$. Sendo assim,
%vamos dividir a demonstração em duas partes:

\textbf{Case 1:} $t\geq 0$. In this case the equation \eqref{desig:tt} give us a estimate for  $\left|t\right|$.
So we have to find an estimate for $u_1$. 
Observe that $u_1$ satisfies the  equality
\begin{align*}
\int_{\Omega}{\left(-\Delta\right)^2 u_1} = \int_{\Omega}{\lambda_1^2 u_1} + \int_{\Omega}{u_+^p} + \int_{\Omega}{f(x)},
\end{align*}
and by taking the $L^{\frac{p+1}{p}}$-norm on both sides result
\begin{align}
\label{desig:bioper}
\int_{\Omega}{\left|\left(-\Delta\right)^2 u_1 - \lambda_1^2 u_1\right|^{\frac{p+1}{p}}} \leq \int_{\Omega}{u_+^{p+1}} 
+ \int_{\Omega}{\left|f(x)\right|^{\frac{p+1}{p}}}.
\end{align}
We can write the first integral in of the right in \eqref{desig:bioper}  following way
\begin{align*}
\int_{\Omega}{u_+^{p+1}}  = \int_{\Omega}{u_+^{p\alpha}\phi_1^{\alpha}\phi_1^{-\alpha}u_+^{p(1-\alpha)+1}}
\end{align*}
with $0<\alpha<1$ to be determined later. Using to the inequality of  $H\ddot{o}lder$ and the inequality \eqref{desig:bit}, we obtain
\begin{align}
\label{desig:biestu}
\int_{\Omega}{u_+^{p+1}} & \leq  \left(\int_{\Omega}{u_+^{p}\phi_1 }\right)^{\alpha} 
\left(\int_{\Omega}{\frac{u_+^{p+\frac{1}{1-\alpha}}}{\phi_1^{\frac{\alpha}{1-\alpha}}}}\right)^{1-\alpha}
 \leq  \left\|f\right\|_r^{\alpha}\left(\int_{\Omega}{\frac{u_+^{p+\frac{1}{1-\alpha}}}{\phi_1^{\frac{\alpha}{1-\alpha}}}}\right)^{1-\alpha}.
\end{align}
Now let's review at the second integral on the right side of the \eqref{desig:bioper}. Using the hiphotesis \eqref{hip:p} and the fact that $\displaystyle r>\frac{N}{3}$ we obtained that  $\displaystyle\frac{p+1}{p}\frac{1}{r}<1$. Due to H\"{o}lder's inequality, it follows that
%\begin{align*}
%\int_{\Omega}{\left|f(x)\right|^{\frac{p+1}{p}}} \leq C\left(\int_{\Omega}{\left|f(x)\right|^r}\right)^{\frac{p+1}{p}\frac{1}{r}}
 %%= C\left\|f(x)\right\|_{r}^{\frac{p+1}{p}}.
%\end{align*}
%this follows the inequality de $H\ddot{o}lder$ s hiphotesis \eqref{hip:p} and the fact that $r>\frac{N}{3}$ we obtain that  $\frac{p+1}{p}\frac{1}{r}<1$.
%Then applying the $H\ddot{o}lder$ inequality 
\begin{align}
\label{desig:biestf}
\int_{\Omega}{\left|f(x)\right|^{\frac{p+1}{p}}} \leq C\left\|f(x)\right\|_{r}^{\frac{p+1}{p}}.
\end{align}
Replacing \eqref{desig:biestu} and \eqref{desig:biestf} in \eqref{desig:bioper}, result
%\begin{align*}
%\int_{\Omega}{\left|\left(-\Delta\right)^2 u_1 - \lambda_1^2 u_1\right|^{\frac{p+1}{p}}} \leq
%\left\|f\right\|_r^{\alpha} 
%\left(\int_{\Omega}{\frac{u_+^{p+\frac{1}{1-\alpha}}}{\phi_1^{\frac{\alpha}{1-\alpha}}}}\right)^{1-\alpha} + \left\|f(x)\right\|^{\frac{p+1}{p}}_{r},
%\end{align*}
\begin{align}
\label{desig:tbvale}
\left\|\left(-\Delta\right)^2 u_1 - \lambda_1^2 u_1\right\|^{\frac{p+1}{p}}_{\frac{p+1}{p}} \leq
\left\|f(x)\right\|_r^{\alpha}\left(\int_{\Omega}{\frac{u_+^{p+\frac{1}{1-\alpha}}}{\phi_1^{\frac{\alpha}{1-\alpha}}}}\right)^{1-\alpha}
+ \left\|f(x)\right\|^{\frac{p+1}{p}}_{r}.
\end{align}
Now we use the Proposition \ref{cor:m} for estimate the term
\begin{align*}
I=\left(\int_{\Omega}{\frac{u_+^{p+\frac{1}{1-\alpha}}}{\phi_1^{\frac{\alpha}{1-\alpha}}}}\right)^{1-\alpha},
\end{align*}
we may to write the integral  I as follows 
\begin{align*}
I=\left\|\frac{u_+}{\phi_1^{\tau}}\right\|^{p(1-\alpha)+1}_{t},
\end{align*}
with
\begin{align}
\label{ig:ttau}
 t= p+\frac{1}{1-\alpha}  \quad \textrm{and} \quad \tau t=\frac{\alpha}{1-\alpha}.
\end{align}
Define 
\begin{align*}
%\label{defin:L}
L = \frac{p}{p+1}-\frac{4}{N},
\end{align*}
note that  $L>0$ because inequality \eqref{hip:p} implies that $\displaystyle 1<\frac{p+1}{p}<\frac{N}{4}$.
Now our aim is to use the Proposition \ref{cor:m}
for  $\displaystyle s=\frac{p+1}{p}$. For  this we must find $\alpha\in\left(0,1\right)$ such that
\begin{align*}
%\label{ig:alfa}
\frac{1}{t}=\frac{1}{s}-\frac{4-\tau}{N}.
\end{align*}
These equality and  equations \eqref{ig:ttau} determine 
\begin{align}
\label{ig:des}
\alpha = \frac{N-NL -NLp}{1+N-pLN}.
\end{align}
Moreover, we can write 
\begin{align}
\label{ig:des}
 \tau=\frac{N-NL -NLp}{1+N+p}, \quad t=\frac{1+N+p}{1+NL},
\end{align}
and using this values it is possible to show that $\alpha \in(0,1)$ e $\tau\in [0,1]$.
Applying the Proposition \ref{cor:m} we get
\begin{align*}
I\leq \left\|u\right\|^{p(1-\alpha)+1}_{4,\frac{p+1}{p}},
\end{align*}
and replacing this inequality in \eqref{desig:tbvale}, we obtain
\begin{align}
\label{desig:iso}
\left\|\left(-\Delta\right)^2 u_1 - \lambda_1^2 u_1\right\|^{\frac{p+1}{p}}_{\frac{p+1}{p}} \leq
\left\|f(x)\right\|^{\alpha}_{r}\left\|u\right\|^{p(1-\alpha)+1}_{4,\frac{p+1}{p}}
+ \left\|f(x)\right\|^{\frac{p+1}{p}}_{r}.
\end{align}
Now, we use the fact that operator
$\left(-\Delta\right)^2  - \lambda_1^2 Id: W^{4,\frac{p+1}{p}}_{*}(\Omega) \longrightarrow L^{\frac{p+1}{p}}_*(\Omega)$
is an isomorphism where
$$ W^{4,\frac{p+1}{p}}_{*}(\Omega)= \left\{ u\in  W^{4,\frac{p+1}{p}}(\Omega); \int_{\Omega}{u \phi_1}=0\right\}$$
and
$$L^{\frac{p+1}{p}}_*(\Omega)= \left\{ u\in  L^{\frac{p+1}{p}}(\Omega) ; \int_{\Omega}{u \phi_1}=0\right\}.$$ 
Therefore, exist $c>0$ such that
\begin{align*}
\left\|u_1\right\|_{4,\frac{p+1}{p}}  \leq c \left\|\left(-\Delta\right)^2 u_1 - \lambda_1^2 u_1\right\|_{\frac{p+1}{p}}.
\end{align*}
Replacing the previous inequality in \eqref{desig:iso}, result
\begin{align*}
\left\|u_1\right\|_{4,\frac{p+1}{p}}^{\frac{p+1}{p}}  \leq \left\|f(x)\right\|^{\alpha}_{r}\left\|u\right\|^{p(1-\alpha)+1}_{W^{4,\frac{p+1}{p}}}
+ \left\|f(x)\right\|^{\frac{p+1}{p}}_{r}.
\end{align*}
Using the decomposition $u=t\phi_1 + u_1$ and \eqref{desig:tt} we get that
\begin{align}
\label{desig:um}
\left\|u_1\right\|_{4,\frac{p+1}{p}}^{\frac{p+1}{p}}  
& \leq  \left\|f\right\|_{r}^{\alpha+ \frac{P(1-\alpha)+1}{p}} + \left\|f(x)\right\|^{\alpha}_{r}\left\|u_1\right\|^{p(1-\alpha)+1}_{W^{4,\frac{p+1}{p}}}
+ \left\|f(x)\right\|^{\frac{p+1}{p}}_{r}.
\end{align}
Using the hypothesis \eqref{hip:p}  we get to show that
\begin{align*}
\frac{1}{\theta}:=\left[p\left(1-\alpha\right)+1\right]\frac{p}{p+1}<1.
\end{align*}
Then 
%the conclusion \eqref{desig:rho}  folow from the \eqref{desig:um} using the Young inequality. In fact,
applying  Young inequality in \eqref{desig:um}, result
\begin{align*}
\left\|u_1\right\|_{4,\frac{p+1}{p}}^{\frac{p+1}{p}}  \leq C\left( \left\|f\right\|_{r}^{\alpha+ \frac{P(1-\alpha)+1}{p}} +
\left\|f(x)\right\|^{\alpha\theta'}_{r}
+ \left\|f(x)\right\|_r^{\frac{p+1}{p}}\right),
\end{align*}
where $\theta'$ is the conjugate of the $\theta$. Finally using the decomposition  $u=t\phi_1+u_1$, we obtain
\begin{align}
\label{desig:bibi}
\left\|u\right\|_{4,\frac{p+1}{p}}^{\frac{p+1}{p}}  \leq C\left( \left\|f\right\|_{r}^{\alpha+ \frac{P(1-\alpha)+1}{p}} +
\left\|f(x)\right\|_r^{\alpha\theta'}
+ \left\|f(x)\right\|_r^{\frac{p+1}{p}} + \left\|f\right\|_r^{1/p}\right) .
\end{align}  
A bootstrap argument and regularity theory will give  that $u\in W^{4,r}(\Omega)$ and exist  $C>0$, such that
\begin{align*}
\left\|u\right\|_{4,r} & \leq C \left( \left\|u\right\|_{4,\frac{p+1}{p}}^{\gamma} +\left\|f\right\|_{r}^{\xi} \right)
\end{align*}
with $\gamma,\xi\geq1$. Using the inequality  \eqref{desig:bibi}, we obtain
\begin{align*}
\left\|u\right\|_{4,r} & \leq \rho(\left\|f\right\|_r).
%C \left( \left\|f\right\|_{r}^{ 2p^2} +
%\left\|f(x)\right\|_r^{\frac{\alpha p^3}{p+1}}
%+ \left\|f(x)\right\|_r^{p(p+1)} + \left\|f\right\|_r +\left\|f\right\|_{r}^{p}\right).
\end{align*}
And recall that $W^{4,r}(\Omega)\hookrightarrow C^1(\overline{\Omega})$ because $r>N/3$. Therefore
\begin{align*}
\left\|u\right\|_{C^1_0(\overline{\Omega})} & \leq \rho(\left\|f\right\|_r).
%C \left( \left\|f\right\|_{r}^{ 2p^2} +
%\left\|f(x)\right\|_r^{\frac{\alpha p^3}{p+1}}
%+ \left\|f(x)\right\|_r^{p(p+1)} + \left\|f\right\|_r +\left\|f\right\|_{r}^{p}\right).
\end{align*}

\textbf{Case 2:} \ $t< 0$. By Hopf's Maximum Principle the first eigenfunction of $(-\Delta, H_0^1(\Omega))$, $\phi_1$ 
lies in the interior of the cone of positive functions in the space $C_0^1(\overline{\Omega})$. 
Then there exist $\epsilon>0$ such that 
$$w\in B_{C_0^1(\overline{\Omega})}(\phi_1,\epsilon) \Rightarrow w>0 \ \textrm{in}\  \Omega  \ \textrm{and} 
\ \frac{\partial w}{\partial \eta}< 0 \ \textrm{on} \ \partial \Omega.$$
where $\eta$ denotes the exterior normal derivative at the boundary of $\Omega$. Recall that  our solution
$u$ of the problem \eqref{prob:bi}, as well $u_1$ belongs to $C_0^1(\Omega)$. Let $\epsilon_0$ be the supremum of such  $\epsilon$'s.
We affirm that $-u_1/t  \notin B_{C_0^1(\Omega)}(0,\epsilon_0)$. Otherwise, it would have
\begin{align*}
\frac{u}{t}= \phi_1-\left(-\frac{u_1}{t}\right) \in B_{C_0^1(\Omega)}(\phi_1,\epsilon_0) .
\end{align*}
This is a contradiction with the fact that $u^+\neq 0$ and $t<0$. Hence,
\begin{align*}
\left|t\right|\leq \frac{1}{\epsilon_0}\left\|u_1\right\|_{C_0^1(\Omega)}. 
\end{align*}
Now, we will need find a priori bound of $u_1$ in $C_0^1(\Omega)$. Note that the inequality \eqref{desig:tbvale}
remains true for $t<0$ and this case we have $u_+\leq u_1$, hence
\begin{align*}
\left\|\left(-\Delta\right)^2 u_1 - \lambda_1^2 u_1\right\|^{\frac{p+1}{p}}_{\frac{p+1}{p}} \leq
\left\|f(x)\right\|_r^{\alpha}\left\|\frac{u_1}{\phi_1^{\tau}}\right\|^{p(1-\alpha)+1}_{L^{p+\frac{1}{1-\alpha}}}
+ \left\|f(x)\right\|_r^{\frac{p+1}{p}}
\end{align*}
Using the Proposition \ref{cor:m} and the inequality \eqref{desig:tt}, we obtain 
\begin{align*}
\left\|\left(-\Delta\right)^2 u_1 - \lambda_1^2 u_1\right\|^{\frac{p+1}{p}}_{\frac{p+1}{p}} \leq
\left\|f(x)\right\|_r^{\alpha}\left\|u_1\right\|^{p(1-\alpha)+1}_{4,\frac{p+1}{p}}
+ \left\|f(x)\right\|_r^{\frac{p+1}{p}}.
\end{align*}
Similarly to the previous case we apply Young inequality and we have
\begin{align}
\label{desig:p1p}
\left\|u_1\right\|^{\frac{p+1}{p}}_{4,\frac{p+1}{p}} \leq
C\left( \left\|f(x)\right\|_r^{\alpha\theta'} 
+ \left\|f(x)\right\|_r^{\frac{p+1}{p}}\right).
\end{align}
We now use that $u_1$ solves the problem
\begin{align*}
\begin{cases}
\left(-\Delta\right)^2 u_1= \lambda_1^2 u_1 + u_+^p + f(x)  & x\in\Omega\\
u_1  = \Delta u_1 = 0 & x\in \partial\Omega.
\end{cases}
\end{align*}
A bootstrap argument and regularity theory implies  that $u\in W^{4,r}(\Omega)$ and exist  $C>0$, such that
\begin{align*}
\left\|u\right\|_{4,r} & \leq C \left( \left\|u\right\|_{4,\frac{p+1}{p}}^{\gamma} +\left\|f\right\|_{r}^{\xi}\right),
\end{align*}
with  $\gamma,\xi>1$. Replacing  inequality \eqref{desig:p1p} in previous inequality we obtain
\begin{align*}
\left\|u\right\|_{4,r} & \leq \rho(\left\|f\right\|_{r}).
%\left( \left\|f\right\|_r^{\alpha \theta' p^2}+\left\|f\right\|_{r}^{p(p+1)} +\left\|f\right\|_{r}^{p}\right).
\end{align*}
And recall that $W^{4,r}(\Omega)\hookrightarrow C^1(\overline{\Omega})$ because $r>N/3$, therefore
 \begin{align*}
\left\|u\right\|_{C^1_0(\overline{\Omega})} & \leq \rho(\left\|f\right\|_{r}).
%C \left( \left\|f\right\|_r^{\alpha \theta' p^2}+\left\|f\right\|_{r}^{p(p+1)} +\left\|f\right\|_{r}^{p}\right).
\end{align*}
\end{proof}

\section{Proof of the Theorem \ref{teo:existencia}}

First, let us introduce the following formulation of  problem \eqref{prob:bi}.
Let 
$T_{f}: C_0^1(\bar{\Omega})\longrightarrow C_0^1(\bar{\Omega})$  map such at
$$T_{f}(u) = \left(\Delta^2\right)^{-1}\left(\lambda_1^2 u + u_+^p + f\right).$$ 
The map $T_f$ is  continuous, compact  and $T_f(u)=u$, if, and only if, $u$ is a solution of  problem \eqref{prob:bi}.

To prove  Theorem \ref{teo:existencia} we use the following proposition.

\begin{proposition}
\label{prop:bi}
There exist $\epsilon>0$ and $R_0>0$ such that for all function $f\in L^{r}(\Omega)$ satisfying the condition  (\ref{hip:f0})
with $\left\|f\right\|_r<\epsilon$ and for which problem (\ref{prob:bi}) possesses at least one solution, it follows that 
$$\deg\left(Id-T_f,B_{C_0^1(\bar{\Omega})}(0,R),0\right)\neq0$$
for all $R>R_0$. 
\end{proposition}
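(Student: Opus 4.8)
The plan is to evaluate the degree by deforming $T_f$, through admissible compact homotopies, into a linear isomorphism whose Leray--Schauder degree can be read off directly; I expect the value to be $+1$.

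First I would fix the ball. By Theorem \ref{lem:estimativa} every solution $u$ of \eqref{prob:bi} satisfies $\|u\|_{C_0^1(\bar\Omega)}\le\rho(\|f\|_r)\le\rho(\epsilon)$, so taking $R_0>\rho(\epsilon)$ guarantees that $Id-T_f$ has no zero on $\partial B_{C_0^1(\bar\Omega)}(0,R)$ for every $R>R_0$. Hence the degree is well defined and, by excision, independent of $R>R_0$, so it suffices to compute it once. This is exactly where the hypotheses $\int_\Omega f\phi_1<0$ and $\|f\|_r<\epsilon$ enter: they make the a priori bound applicable and force all solutions inside the ball.

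Next comes the homotopy. Fix $\mu\in(0,\lambda_1^2)$ and consider, for $\theta\in[0,1]$, the compact map
\[
\mathcal H_\theta(u)=u-\left(\Delta^2\right)^{-1}\!\big((\lambda_1^2-\theta\mu)u+u_+^p+f\big),
\]
which at $\theta=0$ is $Id-T_f$ and at $\theta=1$ is the map attached to the non-resonant superlinear problem $(-\Delta)^2u=(\lambda_1^2-\mu)u+u_+^p+f$. I would then run a second homotopy $s\mapsto Id-\left(\Delta^2\right)^{-1}\!\big((\lambda_1^2-\mu)u+s(u_+^p+f)\big)$, $s\in[0,1]$, collapsing the nonlinearity and the forcing to zero and arriving at the linear map $L=Id-(\lambda_1^2-\mu)\left(\Delta^2\right)^{-1}$. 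Since $\lambda_1^2-\mu$ lies strictly below the first eigenvalue $\lambda_1^2$ of $(-\Delta)^2$ under Navier conditions, $L$ is an isomorphism and no eigenvalue of $(\lambda_1^2-\mu)(\Delta^2)^{-1}$ exceeds $1$; therefore $\deg(L,B(0,R),0)=(-1)^0=+1$. Provided both homotopies are admissible, homotopy invariance yields $\deg(Id-T_f,B(0,R),0)=+1\neq0$.

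The main obstacle is admissibility, namely uniform a priori bounds along the two families, i.e. a single $R_0$ for which no zero of $\mathcal H_\theta$ (nor of the second family) touches $\partial B(0,R)$. For $\theta$ and $s$ bounded away from $0$ the relevant problem is non-resonant, and I would reprove the Brezis--Turner estimate of Theorem \ref{lem:estimativa} for the shifted operator $(-\Delta)^2-(\lambda_1^2-\theta\mu)Id$, observing that the isomorphism invoked there only improves when the spectral parameter drops below $\lambda_1^2$. The delicate regime is $\theta\to0^+$, the re-approach to resonance: testing the equation against $\phi_1$ gives $\theta\mu\int_\Omega u\phi_1=\int_\Omega u_+^p\phi_1+\int_\Omega f\phi_1$, so I must keep $\int_\Omega u\phi_1$ bounded and prevent solutions from escaping along $\pm\phi_1$. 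This is precisely the content of the Hopf/positive-cone argument of Case~2 in Theorem \ref{lem:estimativa}, used together with the strict sign \eqref{hip:f0}; controlling this $\phi_1$-component uniformly is the crux, and once it is in hand the degree equals that of $L$, namely $+1$.
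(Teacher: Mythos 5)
Your approach has a fatal gap: the homotopy in the spectral parameter, $\theta\mapsto\lambda_1^2-\theta\mu$, is not admissible on any fixed ball, and the obstruction you yourself flag as ``the crux'' cannot be overcome. Testing the deformed equation against $\phi_1$ gives $\theta\mu\int_\Omega u\phi_1=\int_\Omega u_+^p\phi_1+\int_\Omega f\phi_1$. For $\theta>0$ the shifted problem admits the purely negative solution $u_\theta=\bigl((-\Delta)^2-(\lambda_1^2-\theta\mu)\bigr)^{-1}f$, whose $\phi_1$-component is $\frac{1}{\theta\mu}\bigl(\int_\Omega f\phi_1\bigr)\phi_1$ up to normalization, while the orthogonal part stays bounded; since $\int_\Omega f\phi_1<0$, for small $\theta$ this is a huge negative multiple of $\phi_1$ plus a bounded remainder, hence $u_\theta<0$, $(u_\theta)_+=0$, and $u_\theta$ genuinely solves the deformed problem with $\|u_\theta\|_{C_0^1}\to\infty$ as $\theta\to0^+$. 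A branch of solutions escapes to infinity along $-\phi_1$ exactly as you return to resonance; this is the resonance phenomenon itself, not a technicality, so no single $R_0$ works for the whole family. The Case~2 (Hopf cone) argument cannot rescue you: it needs $u_+\not\equiv0$, which at resonance follows from $\int_\Omega u_+^p\phi_1=-\int_\Omega f\phi_1>0$, but off resonance the identity above permits $u_+\equiv0$.

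Consequently the value $+1$ is also wrong, and this is visible from the statement itself: the proposition assumes that \eqref{prob:bi} has at least one solution, a hypothesis your argument never uses (a homotopy to an isomorphism would give degree $+1$ even when there are no solutions, i.e.\ when the degree is $0$). The paper argues in the opposite direction: no homotopy, but a count of local indices. Choosing $\epsilon$ so that $\rho(\epsilon)<\bigl(\frac{\lambda_2^2-\lambda_1^2}{p}\bigr)^{\frac{1}{p-1}}$ forces the linearization weight $g=\lambda_1^2+p(u_0^+)^{p-1}$ at any solution $u_0$ to satisfy $\lambda_1^2<g<\lambda_2^2$ a.e.; strict monotonicity of the weighted Navier eigenvalues (via unique continuation, Proposition \ref{prop:pcubi}) then yields $\mu_1(g)<1<\mu_2(g)$, so every solution is non-degenerate with Leray--Schauder index $(-1)^1=-1$, and the total degree is minus the (finite, positive) number of solutions. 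A negative integer cannot equal $+1$, which independently confirms that your homotopy cannot be admissible.
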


\begin{proof}
Let $\rho$ be the function given by  Theorem \ref{lem:estimativa}, as the function $\rho$ is increasing
consider $\epsilon <1$, such that $\rho(\epsilon)< \left(\frac{\lambda_2^2-\lambda_1^2}{p}\right)^{\frac{1}{p-1}}$. 
Let $f\in L^{r}(\Omega)$ with $\left\|f\right\|_{r}\leq \epsilon$ and satisfying  the condition  \eqref{hip:f0}. By Theorem
\ref{lem:estimativa} any solution  $u_0$ of \eqref{prob:bi} satisfies 
\begin{align}
\label{desig:nor}
\left\|u_0\right\|_{C_0^1}<\left(\frac{\lambda_2^2-\lambda_1^2}{p}\right)^{\frac{1}{p-1}}.
\end{align}
Now, consider the linearization of  problem \eqref{prob:bi} at some solution $u_0$ 
\begin{align}
\label{prob:linearizacao}
\begin{cases}
\left(-\Delta\right)^2 v = \left[\lambda_1^2  + p\left(u^+_0\right)^{p-1}\right]v & x\in\Omega\\
v  = \Delta v = 0 & x\in \partial\Omega.
\end{cases}
\end{align}
Using  inequality \eqref{desig:nor}  we obtain a.e. $x\in\Omega$ that
\begin{align}
\label{desig:auts}
\lambda_1^2<\lambda_1^2  + p\left(u^+_0(x)\right)^{p-1}<\lambda_2^2
\end{align}

Denote by  $\mu_1(g)<\mu_2(g)\leq...\leq \mu_n(g)\leq...$ the  eigenvalues of the following eigenvalue problem of weight
\begin{align}
\label{prob:autpeso}
\begin{cases}
\left(-\Delta\right)^2 v= \mu g(x)v & x\in\Omega\\
v  = \Delta v = 0 & x\in \partial\Omega.
\end{cases}
\end{align}
Observe that, if $g(x)=\lambda_1^2  + p\left(u^+_0\right)^{p-1}$ we obtain the linearization \eqref{prob:linearizacao} and
from the  \eqref{desig:auts} we have that $\lambda_1^2<g(x)<\lambda_2^2$. Using the Theorem \ref{teo:contunicabi},
we conclude that the eigenfunctions associated to problem \eqref{prob:autpeso} 
satisfy the Unique Continuation Property, (See Definition \eqref{def:unica}). Then using  Proposition
\ref{prop:pcubi} we obtain
\begin{align*}
\mu_1(g) < \mu_1(\lambda_1^2) = 1 = \mu_2(\lambda_2^2) < \mu_2(g).
 \end{align*}
We conclude that the only solution of problem \eqref{prob:linearizacao} is $u_0\equiv 0$.  Therefore  $u_0$ 
is a non-degenerate solution of \eqref{prob:bi}. Using the Degree Theory, result
$$\deg\left(Id-T_f,B_{C_0^1(\bar{\Omega})}(u_0,R_0),0\right) = (-1)^1,$$
theremore, the solution $u_0$ is isolated and that there is only a finite number of them in $B_{C_0^1(\bar{\Omega})}(0,R)$.
Then
$$\deg\left(Id-T_f,B_{C_0^1(\bar{\Omega})}(0,R),0\right) = \sum{(-1)^1}\neq0,$$

\end{proof}

\subsection*{Conclusion of the proof of Theorem \ref{teo:existencia}} 
Choose $f_1 = -(t\phi_1)^p$ with $0<t<\left(\frac{\epsilon}{\left\|\phi_1^p\right\|_r}\right)^{1/p}$. 
Note that $\left\|f_1\right\|_{r}<\epsilon$ and that $u=t\phi_1$ is a solution of  problem
\eqref{prob:bi} for $f_1$ and using  Proposition \ref{prop:bi} we conclude that
$$\deg\left(Id-T_{f_1},B_{C_0^1(\bar{\Omega})}(0,R),0\right)\neq0$$
for R large enough. 

Now, consider the homotopy $H:\left[0,1\right]\times C_0^1(\Omega)\longrightarrow C_0^1(\Omega)$ such that
\begin{align*}
H\left(\tau,u\right)= \left(I-\left(\Delta^2\right)^{-1}\right)\left(\lambda_1^2u + u_+^P + \left(1-\tau\right)f + \tau f_1\right).
\end{align*}
Notice that $H\left(\tau,u\right)=0$ if, and only if, $u$ is a solution of the following problem
\begin{align}
\begin{cases}
(-\Delta)^2 u = \lambda_1^2 u + u_+^p + \left(1-\tau\right)f + \tau f_1 & \quad\mbox{em}\quad \Omega,\\
u=0  & \quad\mbox{em}\quad \partial\Omega.
\end{cases}
\end{align}
We can apply the estimate of Theorem \ref{lem:estimativa} to conclude that every solution of the problem above
 are uniformly bounded in  $C_0^1(\Omega)$ for some constant, let's say $R_1:=\rho(\max\{\left\|f\right\|_{r},\left\|f_1\right\|_{r})$. 
Hence, we have that 
\begin{equation*}
\deg(H(0,\tau), B_{C_0^1(\Omega)}(0,R_1),0)= \deg(H(1,\tau), B_{C_0^1(\Omega)}(0,R_1),0).
\end{equation*}
and for $R>0$ sufficiently large we conclude that
\begin{align*}
\deg(Id-T_{f}, B_{C_0^1(\Omega)}(0,R),0)= \deg(Id-T_{f_1}, B_{C_0^1(\Omega)}(0,R),0)\neq 0.
\end{align*}
Therefore,  $\deg(Id-T_f, B_{C_0^1(\Omega)}(0,R),0)\neq 0$ what conclude the proof of Theorem \ref{teo:existencia}.

\section{Appendix}

Here we will prove the strict monotonicity of the eigenvalues for the problem involving the 
biharmonic operator with Navier boundary condition.
For this we will use similar results proved in \cite{bib:FigueiredoD} for a  second order elliptic operator. 

Consider the problem involving the biharmonic operator
\begin{align}
\label{prob:m}
\begin{cases}
\Delta^2 u=  \mu m(x) u  & x\in\Omega\\
u  = \Delta u = 0 & x\in \partial\Omega,
\end{cases}
\end{align}
where the weight function $m(x)\in L^{\infty}(\Omega)$ and the measure of the set $\{x\in\Omega; m(x)\neq0\}$ is positive. 

We know that the eigenvalues $\mu$ of problem \eqref{prob:m} are a sequence of positive numbers
$0<\mu_1(m)<\mu_2(m)\leq...$, moreover, it has the variational characterization 
\begin{align}
\label{def:var}
\frac{1}{\mu_k(m)}= \sup_{F_k} \inf \left\{\int_{\Omega}{m u^2}; \ \ u\in F_k \ \ \text{e} \ \ \int_{\Omega}{\left|\Delta u\right|^2=1}\right\}
\end{align}
where the sets $F_k$ varies over all k-dimentional subspaces of  $H$. See \cite{bib:FigueiredoD} for more details.
We show that the strict monotonicty holds if and only if some unique continuation property is satisfied by the corresponding eigenfunctions.
The notation $\leq\neq$ means the inequality a.e. $x\in\Omega$ together with strict inequality on a set of positive measure.

\begin{definition}
\label{def:unica}
We say that a family of functions fills the Unique Continuation Property, (U.C.P.), if no function
but possibly the zero function, vanishes on a set of positive measure.
%Para mais detalhes veja \cite{bib:FigueiredoD}.
\end{definition}

\begin{proposition}
\label{prop:pcubi}
Let  $m$ and $\tilde{m}$ two weights with $m\leq\neq \tilde{m}$ and let $j\in \mathbb{Z}_0$.
If the eigenfunctions associated to $\mu_j(m)$ enjoy the unique continuation property,
then
$\mu_j(m)>\mu_j(\tilde{m})$.
\end{proposition}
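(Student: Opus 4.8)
The plan is to work entirely from the variational characterization \eqref{def:var}, which expresses $1/\mu_k(m)$ as a sup--inf of the quadratic functional $u\mapsto\int_\Omega m u^2$ over the sphere $\{\int_\Omega|\Delta u|^2=1\}$ in $H$. First I would record the non-strict inequality. Since $m\le\tilde m$ pointwise and $u^2\ge 0$, we have $\int_\Omega m u^2\le\int_\Omega\tilde m u^2$ for every $u\in H$. Taking the inner infimum over each $k$-dimensional subspace $F_k$ and then the supremum over all such $F_k$ gives $1/\mu_j(m)\le 1/\mu_j(\tilde m)$, that is $\mu_j(\tilde m)\le\mu_j(m)$. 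The whole content of the proposition is then to upgrade this to a \emph{strict} inequality, and this is exactly where the unique continuation hypothesis must be used.

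For the strict inequality I would argue by contradiction, assuming $\mu_j(m)=\mu_j(\tilde m)=:\mu$. The key structural input is that the outer supremum in \eqref{def:var} for the weight $m$ is attained on the subspace $E_j=\mathrm{span}\{\phi_1,\dots,\phi_j\}$ spanned by the first $j$ eigenfunctions of \eqref{prob:m}, and that on $E_j$ the inner infimum equals $1/\mu_j(m)$ and is attained. Normalising the $\phi_i$ so that $\int_\Omega\Delta\phi_i\Delta\phi_k=\delta_{ik}$, the weak form of \eqref{prob:m} yields $\int_\Omega m\phi_i\phi_k=\delta_{ik}/\mu_i(m)$, so for $u=\sum_{i=1}^j c_i\phi_i$ with $\sum c_i^2=1$ one has $\int_\Omega m u^2=\sum_{i=1}^j c_i^2/\mu_i(m)$, whose minimum $1/\mu_j(m)$ is achieved precisely on the $\mu_j(m)$-eigenspace inside $E_j$. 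Testing the characterization of $1/\mu_j(\tilde m)$ against this same $E_j$, and choosing $u^\ast\in E_j$ with $\|\Delta u^\ast\|_2=1$ a minimiser of $\int_\Omega\tilde m u^2$ over $E_j$ (which exists, $E_j$ being finite-dimensional and the constraint set compact), I would chain
\[
\frac{1}{\mu}=\frac{1}{\mu_j(\tilde m)}\ge\int_\Omega\tilde m\,(u^\ast)^2\ge\int_\Omega m\,(u^\ast)^2\ge\inf_{\substack{u\in E_j\\ \|\Delta u\|_2=1}}\int_\Omega m\,u^2=\frac{1}{\mu_j(m)}=\frac{1}{\mu}.
\]

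Every term is now pinched between $1/\mu$ and $1/\mu$, so all the inequalities are equalities. Equality in the middle gives $\int_\Omega(\tilde m-m)(u^\ast)^2=0$; since $\tilde m-m\ge0$ and $\tilde m-m>0$ on a set $A$ of positive measure (the meaning of $m\leq\neq\tilde m$), this forces $u^\ast=0$ a.e. on $A$. Equality at the last step means $u^\ast$ also minimises $\int_\Omega m u^2$ over $E_j$, so by the computation above $u^\ast$ lies in the $\mu_j(m)$-eigenspace, i.e. $u^\ast$ is a nonzero eigenfunction of \eqref{prob:m} associated to $\mu_j(m)$ that vanishes on the positive-measure set $A$. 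This contradicts the hypothesis that the eigenfunctions associated to $\mu_j(m)$ enjoy the Unique Continuation Property (Definition \ref{def:unica}). Hence the assumed equality is impossible and $\mu_j(m)>\mu_j(\tilde m)$.

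The comparison of quadratic forms is immediate; the delicate point, and the main obstacle, is to justify rigorously the attainment structure of the min--max \eqref{def:var} for an $L^\infty$ weight that may be sign-changing: that the first $j$ eigenfunctions furnish an optimal subspace, that they may be chosen orthonormal for the $\int_\Omega\Delta u\,\Delta v$ inner product with $\int_\Omega m\phi_i\phi_k=\delta_{ik}/\mu_i(m)$, and that the inner infimum over $E_j$ is genuinely attained and only on the top eigenspace. These facts rest on the spectral theory for \eqref{prob:m} developed in \cite{bib:FigueiredoD}; once they are in hand, the contradiction with unique continuation is forced by the pinching argument. I would therefore devote the bulk of the write-up to invoking this spectral structure, after which the strict monotonicity follows as above.
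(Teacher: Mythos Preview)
Your argument is correct and rests on the same ingredients as the paper's: the variational characterization \eqref{def:var}, the fact that the optimal $j$-dimensional subspace is spanned by the first $j$ eigenfunctions for the weight $m$, that minimizers of the inner infimum on this subspace are precisely the $\mu_j(m)$-eigenfunctions, and the unique continuation property to rule out $\int_\Omega(\tilde m-m)u^2=0$ for such an eigenfunction. The packaging, however, differs. The paper proceeds \emph{directly}: fixing the optimal $F_j$ for $m$, it observes that for every $u$ on the constraint sphere in $F_j$ one has the strict inequality $1/\mu_j(m)<\int_\Omega\tilde m\,u^2$ (either because $u$ is an eigenfunction and UCP forces $\int_\Omega m u^2<\int_\Omega\tilde m u^2$, or because $u$ is not a minimizer and already $1/\mu_j(m)<\int_\Omega m u^2$), and then invokes compactness of the sphere in $F_j$ to pass the strict inequality through the infimum. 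You instead argue by \emph{contradiction and pinching}: assuming equality, you squeeze a chain of inequalities to force a single $u^\ast$ that is simultaneously a $\mu_j(m)$-eigenfunction and vanishes on $\{\tilde m>m\}$. Your route trades the compactness step for the explicit diagonalization $\int_\Omega m\phi_i\phi_k=\delta_{ik}/\mu_i(m)$ and the identification of the minimizing set inside $E_j$; the paper's route needs only that minimizers are eigenfunctions, but must justify that a pointwise strict inequality survives the infimum. Both are short and either would be acceptable here.
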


\begin{proof}
Since the extrema in 
\eqref{def:var}  are achieved, there exists $F_j \in H$  of dimension $j$, such that
\begin{align}
\label{inf:variac}
\frac{1}{\mu_j(m)}=  \inf \left\{\int_{\Omega}{m u^2}; \ \ u\in F_j \ \ \text{e} \ \ \int_{\Omega}{\left|\Delta u\right|^2=1}\right\}.
\end{align}
Choose $u \in F_j$, with $\int_{\Omega}{\left|\Delta u\right|^2=1}$. Then either $u$  achieves the infimum in
 \eqref{inf:variac} or not. 

In the first case, $u$ is an eigenfunction associated to $\mu_j(m)$, and so, by the  unique continuation property, we have
\begin{align*}
\frac{1}{\mu_j(m)}= \int_{\Omega}{mu^2}< \int_{\Omega}{\tilde{m}u^2}.
\end{align*}

In the second case
\begin{align*}
\frac{1}{\mu_j(m)} < \int_{\Omega}{mu^2} \leq \int_{\Omega}{\tilde{m}u^2}.
\end{align*}

Thus, in any case
\begin{align}
\label{desig:strict}
\frac{1}{\mu_j(m)} <  \int_{\Omega}{\tilde{m}u^2}.
\end{align}
%Como $u$ foi escolhido aleatoriamente em $F_j$, 
%podemos tomar uma sequência $(u_n)$ em $F_j$, com $\int_{\Omega}{\left|\Delta u_n\right|^2}=1$, tal que 
%\begin{align*}
%\int_{\Omega}{\tilde{m}u_n^2} \rightarrow \inf \left\{\int_{\Omega}{\tilde{m} u^2}; \ \ u\in F_j \ \ \text{e} \ \ \int_{\Omega}{\left|\Delta u\right|^2=1}\right\},
%\end{align*}
It then follows, by compactness argument, that
\begin{align*}
\frac{1}{\mu_j(m)} < \inf \left\{\int_{\Omega}{\tilde{m} u^2}; \ \ u\in F_j \ \ \text{e} \ \ \int_{\Omega}{\left|\Delta u\right|^2=1}\right\}.
\end{align*}
Therefore 
\begin{align*}
\frac{1}{\mu_j(m)}<\frac{1}{\mu_j(\tilde{m})}.
\end{align*}
\end{proof}

We will also use the following result:

\begin{theorem}
\label{teo:contunicabi}
Let $u$ solution of the  problem \eqref{prob:m} with $\left|m(x)\right|<M$. If the set $E=\left\{x \in \Omega; u\left(x\right)=0\right\}$
possess  positive measure then $u\equiv0 $ in $\Omega$. That is, the solution $u$ satisfy  the Unique Continuation Principle.
\end{theorem}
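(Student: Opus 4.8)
The plan is to reduce the fourth-order equation \eqref{prob:m} to a second-order elliptic system with bounded zero-order coupling, and then to run, for that system, the same unique continuation argument that \cite{bib:FigueiredoD} develops for a single second-order operator. First I would set $v=\Delta u$. By the Navier boundary conditions in \eqref{prob:m} we have $u=v=0$ on $\partial\Omega$, while the equation $\Delta^2 u=\mu m(x)u$ becomes the coupled system
\begin{align*}
\Delta u = v, \qquad \Delta v = \mu\, m(x)\, u \quad \text{in } \Omega,
\end{align*}
where $u\in W^{4,p}(\Omega)$ for every $p<\infty$ by elliptic regularity (since $m\in L^{\infty}$), and hence $v=\Delta u\in W^{2,p}(\Omega)$.

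The second step is to show that the hypothesis $|E|>0$ forces both components to vanish on $E$. Since $u\in W^{4,p}(\Omega)$ vanishes a.e. on $E$, the classical fact that the gradient of a Sobolev function vanishes a.e. on each of its level sets gives $\nabla u=0$ a.e. on $E$; applying the same fact to each component $\partial_i u\in W^{3,p}(\Omega)$ yields $D^2u=0$ a.e. on $E$, and in particular $v=\Delta u=0$ a.e. on $E$. Using $|m(x)|<M$, the system then satisfies the pointwise differential inequality
\begin{align*}
|\Delta u|+|\Delta v|\le C\bigl(|u|+|v|\bigr) \quad \text{a.e. in } \Omega,
\end{align*}
so the pair $U=(u,v)$ solves an elliptic system whose principal part is the Laplacian acting diagonally and whose zero-order term is bounded, and $U$ vanishes on the set $E$ of positive measure.

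The final and decisive step is to deduce $U\equiv 0$ from the vanishing of $U$ on a set of positive measure. Because both components carry the same scalar principal part $\Delta$, the Carleman (equivalently, frequency-function) estimates underlying the scalar unique continuation in \cite{bib:FigueiredoD} can be applied componentwise, the bounded coupling being absorbed into the lower-order remainder; this yields the strong unique continuation property for $U$, whence $u\equiv v\equiv 0$ in $\Omega$. That is exactly the Unique Continuation Principle referred to in Definition \ref{def:unica}.

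I expect the main obstacle to lie precisely in this last step. One cannot simply decouple the system into two scalar equations with bounded potentials, because the coupling is off-diagonal and $m$ changes sign, so no scalar factorization of $\Delta^2-\mu m$ into two second-order factors with bounded coefficients is available. The real work is therefore to verify that the unique continuation machinery of \cite{bib:FigueiredoD}, designed for one equation, survives the passage to the coupled inequality $|\Delta U|\le C|U|$, i.e. that the doubling/frequency estimates tolerate the bounded off-diagonal zero-order term; once this is granted, the conclusion is immediate.
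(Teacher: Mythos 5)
First, be aware that the paper does not actually prove Theorem \ref{teo:contunicabi}: its entire proof is the citation ``See proof in \cite{bib:porru}.'' So the only meaningful comparison is between your sketch and the argument in Cuccu--Porru, and your reduction is indeed the standard (and essentially the same) route: pass to the system $\Delta u=v$, $\Delta v=\mu m(x)u$, observe via the level-set theorem for Sobolev functions that $u=0$ a.e.\ on $E$ forces $\nabla u$, $D^2u$, and hence $v=\Delta u$ to vanish a.e.\ on $E$, and then invoke unique continuation from a set of positive measure for a second-order system with diagonal principal part and bounded zero-order coupling. Those first two steps are correct as written.

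The gap is exactly where you locate it, but it is larger than your closing paragraph suggests, and the reference you lean on does not close it. The paper \cite{bib:FigueiredoD} of de Figueiredo and Gossez proves no unique continuation theorem: it establishes strict monotonicity of eigenvalues \emph{assuming} the unique continuation property, and for the validity of that property it in turn cites the literature. Moreover, what you need is not the weak or strong unique continuation property that Carleman estimates yield componentwise essentially for free; it is unique continuation from a set of positive measure. That requires the quantitative step that at a density point $x_0$ of $E$ the solution vanishes to infinite order in the $L^2$ mean, i.e.\ $\int_{B_r(x_0)}|U|^2=O(r^k)$ for every $k$, obtained by iterating a doubling or three-spheres inequality and exploiting that most of each small ball $B_r(x_0)$ lies in $E$ where $U=0$. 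Asserting that ``the machinery survives the passage to the coupled inequality $|\Delta U|\le C|U|$'' is plausible --- the principal part is diagonal, so the scalar estimates apply to each component with the coupling absorbed into the right-hand side --- but this assertion \emph{is} the theorem, and it cannot be left as a remark whose main obstacle you yourself flag as unresolved. To complete the proof you must either carry out that quantitative argument for the vector inequality, or cite a result stating measure-theoretic unique continuation for such systems (or for the bi-Laplacian directly, which is what \cite{bib:porru} supplies and what the paper in fact does).
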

\begin{proof}
See proof in \cite{bib:porru}.

\end{proof}

\end{document}